\documentclass[11pt]{article}
\usepackage[english]{babel}
\usepackage{amssymb}
\usepackage{amsmath}
\usepackage[top=1.2in, bottom=1.4in, left=1in, right=1in]{geometry}
\pagestyle{plain}
\usepackage{parskip}
 \usepackage[T1]{fontenc}
\usepackage{lmodern}
 \usepackage{centernot}
\usepackage{epsfig}
 \usepackage{float}
 \usepackage{amsmath}
\usepackage{amsmath,amsthm,amsfonts,amssymb,amscd}
\setlength{\parskip}{0.2\baselineskip}
\usepackage{indentfirst}
\setlength{\parindent}{2em}
\usepackage{dirtytalk}

\usepackage{mathrsfs}
\usepackage{amsthm}
\usepackage{tikz}
\usepackage{mathtools}
\usepackage{calc}
\usepackage[symbol]{footmisc}
\usepackage{todonotes}
\usetikzlibrary{patterns,arrows,decorations.pathreplacing}
\usetikzlibrary{fadings}
\usepackage{pgfplots}
\usepackage{parskip}
\theoremstyle{plain}
\newtheorem{theorem}{Theorem}

\newtheorem{claim}[theorem]{Claim}
\newtheorem{conjecture}{Conjecture}

\newcommand{\floor}[1]{\left\lfloor{#1}\right\rfloor}
\newcommand{\ceil}[1]{\left\lceil{#1}\right\rceil}

\usepackage{authblk}
\setlength{\affilsep}{2em}

\title{Book free $3$-Uniform Hypergraphs}
\author[1]{Debarun Ghosh}
\author[1,2]{Ervin Gy\H{o}ri} 
\author[5]{Judit Nagy-Gy\"{o}rgy}
\author[1]{Addisu Paulos}
\author[1]{Chuanqi Xiao}
\author[1,4]{Oscar Zamora}

\affil[1]{Central European University, Budapest\par
\texttt{ chuanqixm@gmail.com, ghosh\textunderscore debarun@phd.ceu.edu,addisu\textunderscore 2004@yahoo.com,oscarz93@yahoo.es}}
\affil[2]{Alfr\'ed R\'enyi Institute of Mathematics, Budapest \par
\texttt{gyori.ervin@renyi.mta.hu}}
\affil[4]{Universidad de Costa Rica, San Jos\'e}
\affil[5]{University of Szeged, Szeged\par
\texttt{nagy-gyorgy@math.u-szeged.hu}}
\date{}
\begin{document}

\maketitle
\begin{abstract}
A $k$-book in a hypergraph consists of $k$ Berge triangles sharing a common edge.    In this paper we prove that the number of the hyperedges in a $k$-book-free 3-uniform hypergraph on $n$ vertices is at most $\frac{n^2}{8}(1+o(1))$.
\end{abstract}

\section{ Introduction}
Let $G$ be a graph.  The vertex and the edge set of $G$ are denoted by $V(G)$ and $E(G)$. If there are two triangles sitting on an edge in a graph, we call this a \emph{diamond}. Whereas $k$ triangles sitting on an edge is called a \emph{$k$-book}, denoted by a $B_k$. Similarly, let $H$ be a hypergraph and the vertex and the edge set of $H$ be denoted by $V(H)$ and $E(H)$. A hypergraph is called \emph{$r$-uniform} if each member of $E$ has size $r$. A hypergraph $H = (V,E)$ is called \emph{linear} if every two hyperedges have at most one vertex in common.  A Berge cycle of length $k$, denoted by Berge-$C_k$, is an alternating sequence of distinct vertices and distinct hyperedges of the form $v_1, h_1, v_2, h_2, \dots, v_k, h_k$ where $v_i, v_{i+1}\in h_i$ for each $i \in \{1, 2, \dots , k-1\}$ and $v_kv_1 \in h_k$.  The hypergraph equivalent of $k$-books is defined similarly with $k$ Berge triangles sharing a common edge. We say that this common edge is the base of the $k$-book.

The maximum number of edges in a triangle-free graph is one of the classical results in extremal graph theory and proved by Mantel in $1907$ \cite{MAN}.  The extremal problem for diamond-free graphs follows from this.  Given a graph $G$ on $n$ vertices and having $\floor{\frac{n^2}{4}}+1$ edges.  Mantel showed that $G$ contains a triangle.  Rademacher (unpublished, and simplified later by Erd\H{o}s in \cite{erdos1955}) proved in the 1940s that the number of triangles in $G$ is at least $\floor{\frac{n}{2}}$.  Erd\H{o}s conjectured in 1962 \cite{erdos} that the size of the largest book in $G$ is at least $\frac{n}{6}$
and this was proved soon after by Edwards (unpublished, see also Khad\'ziivanov and Nikiforov \cite{khad}
for an independent proof). 
\begin{theorem}[Edwards \cite{edwards}, Khad\'ziivanov and Nikiforov \cite{khad}]\label{edwards}
Every $n$-vertex graph with more than $ \frac{n^2}{4}$ edges contains an edge that is in at least $\frac{n}{6}$ triangles.
\end{theorem}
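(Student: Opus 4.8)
The plan is to argue by contradiction. Suppose $G$ has $n$ vertices, $e(G) > n^2/4$, but every edge lies in at most $\lceil n/6\rceil - 1$ triangles; writing $t(uv) = |N(u)\cap N(v)|$ for the number of triangles on an edge $uv$ and $b(G) = \max_{uv\in E(G)} t(uv)$, this says $b(G) < n/6$. The first, trivial, observation is that $t(uv) \ge d(u)+d(v)-n$, since $N(u)\cup N(v)\subseteq V(G)$; hence we may additionally assume that \emph{every} edge $uv$ satisfies $d(u)+d(v) < 7n/6$. From $\sum_{uv\in E}(d(u)+d(v)) = \sum_v d(v)^2 \ge 4e(G)^2/n$ this already gives $e(G) < \tfrac{7}{24}n^2$, which is close to but strictly worse than the target $\tfrac14 n^2$; so the degree sequence alone cannot finish the proof, and the presence of many triangles must be used. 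It is worth recording at this stage that, combining the two assumptions, $\Delta(G) < 7n/6 - \delta(G)$ while $\Delta(G) \ge 2e(G)/n > n/2$, so $\delta(G) < 2n/3$.

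The second step brings in the triangles through stability for Mantel's theorem. If the total number of triangles $T$ satisfied $T \ge \tfrac{1}{36}n^3$, then averaging over edges, $b(G) \ge 3T/e(G) \ge 6T/n^2 \ge n/6$, and we would be done; so assume $T < \tfrac{1}{36}n^3$. The stability version of Mantel's theorem (triangle-sparse graphs with more than $n^2/4$ edges are $o(n^2)$-close to bipartite) then forces $G$ to differ from a complete bipartite graph $K_{A,B}$ in only $o(n^2)$ edges. Since $e(G) > n^2/4 \ge |A||B|$, the parts must be nearly balanced, $|A|,|B| = \tfrac n2 \pm o(n)$, and at least one edge lies \emph{inside} a part; take such an edge $u_1u_2 \subseteq A$. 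In the model $K_{A,B}$ both $u_1$ and $u_2$ are joined to all of $B$, so after the $o(n^2)$ edits they are each still joined to all but $o(n)$ of $B$ — provided neither $u_1$ nor $u_2$ is one of the few vertices incident to an abnormal share of the edits — whence $t(u_1u_2) \ge |B| - o(n) = \tfrac n2 - o(n) \ge n/6$, the desired contradiction.

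The hard part will be making this last step robust: the $o(n^2)$ edits could be concentrated near a bounded number of vertices, and one must guarantee that the contradiction-producing intra-part edge avoids such vertices; in fact this is essentially the point where the exact constant $n/6$, as opposed to $(\tfrac16 - o(1))n$, becomes subtle. A cleaner route for the exact statement is to run the argument inductively on $n$: whenever $\delta(G) \le \lfloor n/2\rfloor$, delete a vertex $v$ of minimum degree, noting $e(G-v) \ge \lfloor (n-1)^2/4\rfloor + 1$ by a short floor computation, so the inductive hypothesis applies to $G-v \subseteq G$; this reduces to the case $\delta(G) > n/2$, where one uses $t(uv) \ge 2\delta(G) - n$ (already sufficient once $\delta(G) \ge 7n/12$) together with the structural analysis of the middle range $n/2 < \delta(G) < 7n/12$. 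The bookkeeping in the induction is itself slightly delicate, since deleting a vertex drops the target from $\lceil n/6\rceil$ to $\lceil (n-1)/6\rceil$, which is strictly smaller exactly when $n\equiv 1\pmod 6$; a small amount of slack harvested from the dense case (or a suitably strengthened inductive statement) absorbs this residue. I expect the genuine obstacle throughout is reconciling the clean averaging/stability heuristic with this exact-constant, floor/ceiling behaviour — presumably the reason the original arguments of Edwards and of Khad\'ziivanov--Nikiforov require care.
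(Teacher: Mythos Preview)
The paper does not prove Theorem~\ref{edwards}; it is quoted as a classical result of Edwards and of Khad\'ziivanov--Nikiforov and then used as a black box inside the proof of Lemma~\ref{H_1}. There is therefore no ``paper's own proof'' to compare your proposal against.

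As for the proposal itself: it is an outline, not a proof, and you say as much. The stability route you sketch would at best yield an edge in $(\tfrac{1}{6}-o(1))n$ triangles, not exactly $n/6$, because the $o(n^2)$ edits coming from stability can concentrate on the handful of intra-part edges and strip their common neighbourhoods; you flag this yourself. Your inductive reduction to $\delta(G)>n/2$ is the standard opening move in the known arguments, but the range $n/2<\delta(G)<7n/12$ is exactly where the real work lies, and ``structural analysis of the middle range'' is a placeholder, not a proof. So what you have is a reasonable road map with the decisive stretch still unpaved; obtaining the exact constant $n/6$ (as opposed to the asymptotic version) genuinely requires the kind of careful argument in the original Edwards or Khad\'ziivanov--Nikiforov papers, and is not recoverable from averaging plus stability alone.
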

Both Rademacher’s and Edwards’ results are sharp. In the former, the addition of an edge to one part in the complete balanced bipartite graph (note that in $G$ there is an edge contained in $\floor{\frac{n}{2}}$ triangles) achieves the maximum. In the latter, every known extremal construction of $G$  has $\Omega(n^3)$ triangles.  For more details on book-free graphs we refer the reader to the following articles  \cite{erdos1975}, \cite{qiao2021} and \cite{yan2021tur}. We look into the equivalent problem in the case of hypergraphs.

Given a family of hypergraphs $\mathcal{F}$, we say that a hypergraph $H$ is $\mathcal{F}$-free if for every $F \in \mathcal{F}$, the hypergraph $H$ does not contain a $F$ as a sub-hypergraph. 

The systematic study of the Tur\'an number of Berge cycles started with Lazebnik and Verstra\"ete \cite{lazebnik}, who studied the maximum number of hyperedges in an $r$-uniform hypergraph containing no Berge cycle of length less than five.  Another result was the study of Berge triangles by Gy\H{o}ri \cite{gyori1}.  He proved that:
\begin{theorem}[Gy\H{o}ri \cite{gyori1}]
The maximum number of hyperedges in a Berge triangle-free $3$-uniform hypergraphs on $n$ vertices is at most $\frac{n^2}{8}$.
\end{theorem}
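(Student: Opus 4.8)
The plan is to deduce this from Mantel's theorem by extracting, from any Berge-triangle-free $3$-uniform hypergraph $H$ on $n$ vertices with $m$ hyperedges, a triangle-free graph $G$ on $V(H)$ with exactly $2m$ edges; Mantel then gives $2m\le e(G)\le\floor{n^2/4}$, hence $m\le n^2/8$. To build $G$ I would pick, for every hyperedge $h$, one of its three pairs as a \emph{forbidden pair} $\operatorname{forb}(h)$, and let $G$ consist of all pairs that occur as a non-forbidden pair of some hyperedge. Two properties must hold: (i) no pair is a non-forbidden pair of two different hyperedges, which forces $e(G)=2m$; and (ii) $G$ is triangle-free. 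Everything hinges on choosing $\operatorname{forb}$ so that (i) and (ii) hold simultaneously.

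The combinatorial heart --- and the step I expect to be the main obstacle --- is the structural claim that \emph{no hyperedge contains two pairs of codegree at least $2$}, where the codegree of a pair is the number of hyperedges containing it. To see this, suppose $h=\{a,b,c\}$ has two pairs of codegree $\ge2$; since any two pairs of a triple share a vertex, we may assume they are $\{a,b\}$ and $\{a,c\}$. Then there are hyperedges $h_2=\{a,b,x\}\ne h$ and $h_3=\{a,c,y\}\ne h$ with $x\ne c$ and $y\ne b$. If $x=y$, then the vertices $b,c,x$ together with $h,h_2,h_3$ form a Berge triangle; if $x\ne y$, then the vertices $a,b,c$ together with $h,h_2,h_3$ form a Berge triangle. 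Both contradict the hypothesis on $H$, so the claim holds. The delicate point is exactly the bookkeeping needed to certify that, in each case, the three hyperedges and the three vertices really are distinct and incident in the required pattern; this is also the only place where Berge-triangle-freeness is used.

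Granting the claim, I would then construct a suitable forbidden-pair assignment as follows. Let $F$ be the set consisting of every pair of codegree $\ge2$, together with one arbitrarily chosen pair from each hyperedge all of whose pairs have codegree $1$. By the structural claim each hyperedge contains at most one pair of codegree $\ge2$, and a pair of codegree $1$ lies in a unique hyperedge, so the chosen pairs cannot interfere with any other hyperedge; a short case check then shows that every hyperedge contains \emph{exactly one} pair of $F$. Define $\operatorname{forb}(h)$ to be this unique pair.

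It remains to verify (i) and (ii). For (i): if a pair $p$ were a non-forbidden pair of distinct hyperedges $h\ne h'$, then $p\subseteq h\cap h'$ has codegree $\ge2$, so $p\in F$; but $p$ is a pair of $h$ and the unique pair of $F$ in $h$ is $\operatorname{forb}(h)$, forcing $p=\operatorname{forb}(h)$, a contradiction. For (ii): a triangle $xyz$ in $G$ has its edges contributed by hyperedges $h_1\ni\{x,y\}$, $h_2\ni\{y,z\}$, $h_3\ni\{z,x\}$; if these are pairwise distinct we get a Berge triangle on $x,y,z$, impossible, so two of them coincide, which forces that hyperedge to be $\{x,y,z\}$ and its forbidden pair to be the third edge of the triangle, whence that edge lies in $F$; but it also lies in $G$, so it is a non-forbidden pair of some hyperedge containing it, and uniqueness of the $F$-pair in that hyperedge again gives a contradiction. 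Hence $G$ is triangle-free with $e(G)=2m$, and Mantel's theorem finishes the proof: $2m\le\floor{n^2/4}\le n^2/4$, i.e.\ $m\le n^2/8$.
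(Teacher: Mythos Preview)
The paper does not contain its own proof of this theorem; it is merely quoted as Gy\H{o}ri's result from \cite{gyori1}. So there is nothing in the present paper to compare your argument against directly.

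That said, your proof is correct, and it is essentially Gy\H{o}ri's original argument. The crucial structural claim---that in a Berge-triangle-free $3$-uniform hypergraph no hyperedge can contain two pairs of codegree at least $2$---is exactly the lemma Gy\H{o}ri isolates, and your two-case verification (according to whether $x=y$ or $x\neq y$) is the standard one; the distinctness checks you flag are routine and all go through. Your construction of the forbidden-pair assignment via the set $F$ is a clean way to formalise the ``keep two edges from every hyperedge'' step, and your verifications of (i) and (ii) are correct: in (ii) the case $h_1=h_2$ forces $\operatorname{forb}(h_1)=\{z,x\}\in F$, and then any $h_3$ witnessing $\{z,x\}\in G$ would have $\{z,x\}$ as its unique $F$-pair, i.e.\ its forbidden pair, a contradiction. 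Mantel then finishes as you say.

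For context, the paper's Lemma~\ref{H_1} runs the same ``two private (red) edges per hyperedge, then invoke a triangle theorem'' philosophy on the sub-hypergraph $H_1$, except that there the relevant graph need not be triangle-free and Edwards' book theorem is used instead of Mantel.
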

It continued with the study of Berge five cycles by Bollob\'{a}s and Gy\H{o}ri \cite{gyori2}.  In \cite{gyorikatona}, Gy\H{o}ri, Katona, and Lemons proved the following analog of the Erd\H{o}s-Gallai Theorem \cite{erdosgallai} for Berge paths.  For other results see \cite{other1,other2}.  The particular case of determining the maximum number of the hyperedges of a triangle-free linear hypergraph on $n$ vertices 
is equivalent to the famous $(6,3)$-problem, which is a special case of a general problem of Brown, Erd\H{o}s, and S\'os. The following theorem of Ruzsa and Szemer\'edi plays important role in our paper:
\begin{theorem}[Ruzsa and Szemer\'edi \cite{trianglefreelinear}]\label{ruzsa}
For any $\epsilon>0$ the exists $n_0(\epsilon)$ such that if  $n>n_0(\epsilon)$ then a Berge-triangle-free 3-uniform linear hypergraph on $n$ vertices has at most $\epsilon n^2$ hyperedges.
\end{theorem}

We continue the work on that and determine the maximum number of hyperedges for a $k$-book-free $3$-uniform hypergraph.  The main result is as follows:

\begin{theorem}
\label{3-unif}
For a given $k\geq 2$ and $\epsilon>0$ there exists $n_1(k,\epsilon)$ such that if $n>n_1(k,\epsilon)$ then
a $3$-uniform $B_k$-free hypergraph $H$ on $n$ vertices can have at most  $\dfrac{n^2}{8}+\epsilon n^2$ edges. 
\end{theorem}
The following example shows that this result is asymptotically sharp.  Take a complete bipartite graph with color classes of size $\ceil{\frac{n}{4}}$ and $\floor{\frac{n}{4}}$ respectively.  Denote the vertices in each class with $x_i$ and $y_i$ respectively. Construct a graph by doubling each vertex and replacing each edge with two hyperedges as shown below (Figure \ref{extremal_hyp}). So essentially, we have replaced every graph edge with two hyperedges. The construction does not contain a $B_k$, as it does not contain a Berge triangle. With this, the number of hyperedges is $2\times \frac{n^2}{16}=\frac{n^2}{8}$.

\begin{figure}[ht]
\centering
\begin{tikzpicture}[scale=0.25]
\draw[fill=black](0,0)circle(8pt);
\draw[fill=black](0,6)circle(8pt);
\draw[rotate around={0:(0,0)},red] (0,0) ellipse (5 and 2);
\draw[rotate around={0:(0,6)},red] (0,6) ellipse (5 and 2);
\node at (0,-1){$x_i$};
\node at (0,7){$y_j$};
\draw[thick,black] (0,0)--(0, 6);
\end{tikzpicture}\qquad\qquad
\begin{tikzpicture}[scale=0.25]
\draw[fill=black](0,0)circle(8pt);
\draw[fill=black](0,6)circle(8pt);
\draw[fill=black](6,0)circle(8pt);
\draw[fill=black](6,6)circle(8pt);
\draw[rotate around={0:(3,0)},red] (3,0) ellipse (9 and 2);
\draw[rotate around={0:(3,6)},red] (3,6) ellipse (9 and 2);
\node at (-1,-1){$x_i$};
\node at (-1,7){$y_j$};
\node at (7,-1){$x'_i$};
\node at (7,7){$y'_j$};

\draw[thick,black] (6,6)--(0,0)--(0, 6)--(6,0);
\draw[thick,black] (0,0)--(0, 6);
\draw[thick,black] (0,6)--(6, 6);
\draw[thick,black] (6,0)--(6, 6);

\end{tikzpicture}
\caption{Replacing every edge $x_iy_i$ in the bipartite graph with two hyperedges $x_iy_jy'_j$ and $y_jy'_jx'_i$}
\label{extremal_hyp}
\end{figure}
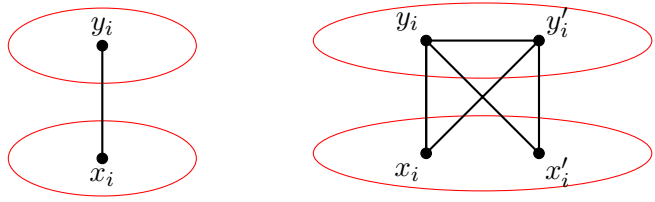

\section{Proof of Theorem \ref{3-unif}}

Fix $k\ge 2$, $\epsilon>0$ and set $$n_1(k,\epsilon)=\max\left\{18k+12, n_0\left(\frac{\epsilon}{6k^2-8k}\right)\right\}$$ where $n_0(.)$ is from Theorem~\ref{ruzsa}. Suppose that $n>n_1(k,\epsilon)$.
Let $H$ be a $B_k$-free $3$-uniform hypergraph on $n$ vertices.  We are interested in the $2$-shadow, i.e., let $G$ be a graph with vertex set $V(H)$ and $$E(G) = \{ab \mid \{a,b\} \subset e \in E(H)\}.$$  If an edge in $G$ lies in more than one hyperedge in $H$, we color it blue.  Otherwise, we color it red.  
We define hypergraphs $H_r$ and $H_b$ in the following way. $V(H_r)=V(H_b)=V(H)$, $$E(H_r)=\{e\in E(H) \mid e \textrm{ contains two or three red edges of } G\}$$ and $E(H_b)=E(H)\setminus E(H_r)$. Note that each hyperedge in $H_b$ contains two or three blue edges of $G$.

\begin{claim}\label{H_1}
The number of hyperedges in $H_r$ is at most $\frac{n^2}{8}$.
\end{claim}
\begin{proof}
Denote the subgraph of $G$ formed by the red colored edges by $G_r$.  Suppose that $|E(G_r)|\geq \frac{n^2}{4}+1$.  By Theorem~\ref{edwards} we have a book of size $\frac{n}{6}$ in $G_r$.  Denote the vertices of the $\frac{n}{6}$-book in $G_r$ with $u,v$ and $x_i$, $1\leq i\leq \frac{n}{6}$ respectively where $uv$ is the base of the book. Denote the third vertex of the hyperedge containing edge $uv$ by $w$, set $X=\{x_i\mid 1\leq i\leq \frac{n}{6}\}$ and for each $x_i\in X$ denote the hyperedge containing $ux_i$  and $vx_i$ by $ux_iy_i$ and $vx_iz_i$ respectively. 

Set $E':=\emptyset$ and $X':=\emptyset$.
Go through the vertices of $X$ and perform the following procedure for each of them. At the beginning of the process no vertex is marked.

If the current vertex $x_i=w$ then mark it.

If $x_i$ is unmarked then
\begin{itemize}
    \item add $x_i$ to $X'$ and hyperedges $ux_iy_i$ and $vx_iz_i$ to $E'$,  
    \item if there exists $j>i$ such that $y_i=x_j$ then mark $x_j$,
    \item if there exists $\ell>i$ such that $z_i=x_\ell$ then mark $x_\ell$.
\end{itemize}
By definition of red edges and  the procedure (i.e. it adds two new hyperedges to  $E'$ forming a Berge triangle with $uvw$ at each step handling an unmarked vertex but at most one: when $x_i=w$) the set of hyperedges $E'\cup\{uvw\}$ with vertex set $X'\cup\{u,v\}$ form a $k'$-book with base $uvw$, where $k'=|X'|$.
Moreover at each step of the procedure whenever an unmarked vertex was added to $X'$ then at most two more vertices became marked. Each unmarked vertex are in $X'$ at the end of the procedure, therefore $$k'\ge \frac{|X\setminus\{w\}|}{3}\ge\frac{n/6-1}{3}$$ at the end of the procedure and it is at least $k$ by the definition of $n_1(k,\epsilon)$, but this is a contradiction.

Hence $|E(G_r)|\leq \frac{n^2}{4}$ and $$|E(H_r)|\le\frac{|E(G_r)|}{2}\le\frac{n^2}{8}$$ by the definition of red colored edges.
\end{proof}

Now let us work on the sub-hypergraph $H_b$.
\begin{claim}\label{H_2_hyperedge}
Each pair of vertices is contained in at most $2k-2$ hyperedges of $H_b$.
\end{claim}
\begin{proof}
Suppose that $\{u,v\}$ is a pair of vertices which is contained in $2k-1$ hyperedges of $H_b$.  Note that edge $uv$ is colored blue.  Denote the third vertices of hyperedges containing $u$ and $v$ by $x_1,\ldots, x_{2k-1}$ and set $X=\{x_i\mid 1\leq i\leq 2k-1\}$. Observe that for each $i$ at least one of $ux_i$ and $vx_i$ is colored blue. 

Set $E':=\emptyset$ and $X':=\emptyset$. Go through the vertices of $X$ and perform the following procedure for each of them. At the beginning of the process no vertex is marked.

If the current vertex $x_i=x_{2k-1}$ and there is no marked vertex in $X$ then do nothing.

Otherwise if $x_i$ is unmarked then 
\begin{itemize}
    \item add $x_i$ to $X'$ and add $ux_iv$ to $E'$,
    \item if $ux_i$ is colored blue denote a hyperedge containing it by $ux_iy_i$ where $y_i\ne v$ and add $ux_iy_i$ to $E'$,  
    \item otherwise $vx_i$ is colored blue, so denote a hyperedge containing it by $vx_iy_i$ where $y_i\ne u$ and add $vx_iy_i$ to $E'$,  
    \item if there exists $j>i$ such that $y_i=x_j$ then mark $x_j$.
\end{itemize}

If at the end of the procedure there is no marked vertex in $X$ then set $w=x_{2k-1}$ otherwise let $w$ be an arbitrary marked vertex.

By definition of blue edges and  the procedure (i.e. it adds two new hyperedges to  $E'$ forming a Berge triangle with $uvw$ at each step handling an unmarked vertex but at most the last one) the set of hyperedges $E'\cup\{uvw\}$ with vertex set $X'\cup\{u,v\}$ form a $k'$-book with base $uvw$ where $k'=|X'|$. 
Moreover if there is no marked vertex in $X$ at the end of the process then $X'=X\setminus \{x_{2k-1}\}$, otherwise at each step of the procedure whenever an unmarked vertex was added to $X'$ than at most one more vertex became marked and each unmarked vertex are in $X'$ at the end of the procedure. Therefore $k'\ge k$, but it is a contradiction.
\end{proof}

We now give an upper bound on the number of hyperedges in $H_b$.
\begin{claim}\label{H_2}
The number of hyperedges in $H_b$ is at most $\epsilon n^2$.
\end{claim}

\begin{proof}
Take a hyperedge $xyz$ in the sub-hypergraph $H_b$.  By Claim~\ref{H_2_hyperedge} there are at most $2k-2$ hyperedges of $H_b$ containing each of the pairs of vertices $xy$, $yz$, and $xz$. 
If we deleted all such hyperedges barring $xyz$ we would delete at most $6k-9$ hyperedges. Therefore there is a linear 3-uniform subhypergraph $H'_b$ of $H_b$ with $V(H_b')=V(H_b)=V(H)$ and $$|E(H_b')|\ge \frac{|E(H_b)|}{6k-8}$$ (i.e. a greedy algorithm can find an appropriate $H_b'$).

Consider a hyperedge $e$ in $H_b'$.  Observe that $H_b'$ is a $B_k$-free hypergraph, since it is a subhypergraph of $H$,
therefore the number of Berge triangles sitting on the edge $e$ is at most $k-1$. 
Apply the following greedy procedure until all the hyperedges are marked. In a step pick an unmarked hyperedge, mark it and delete an unmarked hyperedge of each Berge triangle containing the current hyperedge. Observe that this marked edge is not an edge of a triangle anymore.
Define $H_b''$ the following way. Let $V(H_b'')=V(H_b')=V(H)$ and $E(H_b'')$ contains the remaining hyperedges of $H_b'$. Observe that at most $k-1$ edges were deleted in each step and marked edges were never deleted. Therefore $$|E(H_b'')|\ge \frac{|E(H_b')|}{k}.$$ 
Moreover $H_b''$ is a Berge-triangle-free 3-uniform linear hypergraph therefore Theorem~\ref{ruzsa} can be applied with $\epsilon'=\frac{\epsilon}{6k^2-8k}$. We get that
$$\frac{|E(H_b)|}{6k^2-8k}\le |E(H_b'')| \le \frac{\epsilon n^2}{6k^2-8k}.$$
\end{proof}

\begin{proof}[Proof of Theorem \ref{3-unif}]
By definition $E(H)$ is a disjoint union of $E(H_r)$ and $E(H_b)$. By Claim~\ref{H_1} and Claim~\ref{H_2}, $$|E(H)|\leq |E(H_r)|+|E(H_b)|\leq \frac{n^2}{8}+\epsilon n^2.$$ 
\end{proof}
\section{Conclusions}
Recall that both Tur\'an numbers of triangle-free graph and $k$-book-free graphs on $n$ vertices are $\frac{n^2}{4}$, moreover
Gy\H{o}ri \cite{gyori1} proved that the maximum number of hyperedges in a Berge triangle-free $3$-uniform hypergraphs on $n$ vertices is at most $\frac{n^2}{8}$.  Given the similarities, we conjecture the following:
\begin{conjecture}
For a given $k\geq 2$ every $3$-uniform $B_k$-free hypergraph $H$ on $n$ vertices ($n$ is large) has at most $\dfrac{n^2}{8}$ hyperedges.
\end{conjecture}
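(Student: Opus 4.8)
The plan is to keep the decomposition $H = H_1 \sqcup H_2$ from the proof of Theorem~\ref{3-unif} and to turn both halves into exact inequalities. Since Lemma~\ref{H_1} already gives $e(H_1)\le \frac{n^2}{8}$, and since each hyperedge of $H_1$ contributes at least two \emph{distinct} red shadow edges (so $e(G_1)\ge 2e(H_1)$), the conjecture is equivalent to $e(G_1) + 2e(H_2) \le \floor{\frac{n^2}{4}}$, i.e.\ to the assertion that every hyperedge of $H_2$ must ``cost'' at least two crossing pairs of the red shadow $G_1$. If $e(H_2)=0$ we are done by Lemma~\ref{H_1}, so we may assume the extremal regime $e(G_1)\ge \floor{\frac{n^2}{4}}-2e(H_2)$; by Lemma~\ref{H_2} this already forces $e(G_1)\ge \frac{n^2}{4}-o(n^2)$.

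The first step is a stability version of Lemma~\ref{H_1}. Its case analysis shows that a book of size $2k$ in $G_1$ lifts to a Berge-$B_k$ in $H$ (at most one apex is discarded per step, which is affordable because $2k$ is a constant), so $G_1$ has no book of size $2k$; combined with $e(G_1)\ge \frac{n^2}{4}-o(n^2)$ and a quantitative stability form of Mantel's theorem, this yields a partition $V=A\cup B$ with $|A|,|B| = \frac{n}{2}+o(n)$ across which all but $o(n^2)$ edges of $G_1$ run. In this picture a generic hyperedge of $H_1$ has the form $\{a,b,b'\}$ with $a\in A$, $b,b'\in B$ (two crossing red pairs and one within-$B$ pair), and a generic vertex of $A$ lies in $(1-o(1))|B|$ such hyperedges, reaching $(1-o(1))|B|$ distinct vertices of $B$ through red pairs; symmetrically with $A$ and $B$ exchanged.

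The heart of the matter --- and the step I expect to be the genuine obstacle --- is to show that this near-bipartite picture leaves essentially no room for $H_2$. The local mechanism is: if $u,v$ lie in the same part, say $A$, and share a vertex $z\in B$ such that the red pairs $uz$ and $vz$ lie in two distinct hyperedges $\{u,z,\cdot\},\{v,z,\cdot\}$ of $H_1$, then \emph{any} hyperedge $\{u,v,w\}\in H_2$ closes a Berge triangle on base $uv$ using those three hyperedges; taking $k$ distinct such common vertices $z$ (there are $(1-o(1))|B|$ of them for all but a negligible set of pairs $u,v\in A$, and the degenerate coincidences $w=z$ or $e_{uz}=e_{vz}$ eliminate only $o(|B|)$ of them) produces a Berge-$B_k$ with base $uv$. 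Hence a hyperedge of $H_2$ cannot be contained in one part unless two of its vertices are ``exceptional'', and a parallel argument (using that crossing pairs are red, hence in a single hyperedge, for all but $o(n^2)$ of them, so blue crossing pairs are rare) disposes of the mixed hyperedges of $H_2$. The difficulty is to make this accounting airtight: one must control the $o(n)$ exceptional vertices (low degree, on the wrong side, or incident to a blue crossing pair), show that every hyperedge of $H_2$ is anchored at such a vertex, and feed a bound of the shape $e(H_2)\le C_k\cdot(\#\text{exceptional vertices})$ back into the stability estimate so that the deficiency $\floor{\frac{n^2}{4}}-e(G_1)$ actually absorbs $2e(H_2)$, all without double-counting over overlapping hyperedges of $H_2$ or letting the $o(n)$ slack in the part sizes accumulate. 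Since the only available handle on $H_2$, namely Theorem~\ref{ruzsa}, is a pure cardinality bound that says nothing about how $H_2$ sits relative to $G_1$, closing this last gap appears to need a new interaction argument, which is presumably why the statement is posed as a conjecture; running everything cell-by-cell inside a Szemer\'edi partition of $G$ is a natural fallback, but Theorem~\ref{ruzsa} again warns that the sparse remainder cannot simply be discarded.
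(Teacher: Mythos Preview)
The paper contains no proof of this statement: it is posed in the concluding section as an open conjecture, motivated by the analogy $\ex(n,C_3)=\ex(n,B_k)=\lfloor n^2/4\rfloor$ for graphs and by Gy\H{o}ri's exact bound for Berge-triangle-free $3$-graphs. There is therefore nothing in the paper to compare your proposal against.

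What you have written is a programme, not a proof, and you acknowledge this in your last paragraph. A couple of remarks on the programme itself. First, the reduction you state is only a \emph{sufficient} condition, not an equivalence: hyperedges of $H_1$ with three red shadow edges, and the (at most one) red edge contributed by each hyperedge of $H_2$, can make $e(G_1)$ strictly larger than $2e(H_1)$, so $e(G_1)+2e(H_2)\le \lfloor n^2/4\rfloor$ is in general stronger than $e(H)\le n^2/8$. Second, the structural inputs you invoke are legitimate: the case analysis in Lemma~\ref{H_1} does show that a $2k$-book in $G_1$ lifts to a Berge-$B_k$ in $H$, so $G_1$ is $B_{2k}$-free and hence (for large $n$) has at most $\lfloor n^2/4\rfloor$ edges, and in the near-extremal regime the standard stability theorems apply. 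The local mechanism you describe for forbidding $H_2$-hyperedges inside one part of the bipartition is also correct.

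The gap you name is real and is exactly why the authors leave the statement as a conjecture. To pass from the asymptotic Theorem~\ref{3-unif} to an exact bound one must show that each hyperedge of $H_2$ forces a corresponding deficit in $e(G_1)$ below $\lfloor n^2/4\rfloor$, uniformly and without double counting. Nothing in the paper supplies this: Theorem~\ref{ruzsa} gives only $e(H_2)=o(n^2)$ with no control on how $H_2$ sits relative to $G_1$, and the stability of $G_1$ alone does not localise $H_2$ tightly enough to absorb the slack. Your outline is a sensible line of attack, but as it stands it restates the obstacle rather than removing it.
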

\section{Acknowledgements}
Gy\H{o}ri's research was partially supported by the National Research, Development and Innovation Office NKFIH, grants  K132696, K116769, and K126853. Judit Nagy-Gy\"orgy acknowledges support by the project TKP2021-NVA-09. Project no. TKP2021-NVA-09 has been implemented with the support provided by the Ministry of Innovation and Technology of Hungary from the National Research, Development and Innovation Fund, financed under the TKP2021-NVA funding scheme. Nagy-Gy\"orgy's	research was partially supported by the National Research, Development and Innovation Office NKFIH, grant 
KH129597.
 
\bibliographystyle{abbrv}
\bibliography{references}
\end{document}